\newcommand{\ds}{\displaystyle}
\newcommand{\wt}{\widetilde}
\newcommand{\ra}{{\rightarrow}}
\newcommand{\eproof}{\hfill\rule{2.2mm}{3.0mm}}
\newcommand{\esubproof}{\hfill$\square$}
\newcommand{\Proof}{\noindent {\bf Proof.~~}}
\newcommand{\D}{{\mathcal D}}
\newcommand{\TT}{{\mathcal T}}
\newcommand{\PP}{{\mathcal P}}
\newcommand{\E}{{\mathcal E}}
\newcommand{\F}{{\mathcal F}}
\newcommand{\G}{{\mathcal G}}
\newcommand{\M}{{\mathcal M}}
\newcommand{\B}{{\mathcal B}}
\newcommand{\RR}{{\mathcal R}}
\newcommand{\SSS}{{\mathcal S}}
\newcommand{\HH}{{\mathcal H}}
\newcommand{\CC}{{\mathcal C}}
\newcommand{\QQ}{{\mathcal Q}}
\newcommand{\Sq}{\Sigma_q}
\newcommand{\ST}{\{0,1\}}
\newcommand{\I}{{\mathcal I}}
\newcommand{\J}{{\mathcal J}}
\newcommand{\K}{{\mathcal K}}
\newcommand{\LL}{{\mathcal L}}
\newcommand{\W}{{\mathcal W}}
\newcommand{\V}{{\mathcal V}}
\newcommand{\U}{{\mathcal U}}
\newcommand{\Haus}{{\mathcal H}}
\newcommand{\Prob}{{\rm Prob}}
\newcommand{\z}{{\mathcal z}}
\newcommand{\m}{{\mathcal m}}
\newcommand{\R}{{\mathbb R}}
\newcommand{\Z}{{\mathbb Z}}
\newcommand{\T}{{\mathbb T}}
\newcommand{\C}{{\mathbb C}}
\newcommand{\Q}{{\mathbb Q}}
\newcommand{\N}{{\mathbb N}}
\renewcommand{\v}{{\bf v}}
\newcommand{\e}{{\bf e}}
\newcommand{\uu}{{\bf u}}
\newcommand{\ww}{{\bf w}}
\newcommand{\edge}{{\mapsto}_R}
\renewcommand{\i}{{\bf i}}
\renewcommand{\j}{{\bf j}}
\renewcommand{\k}{{\bf k}}
\renewcommand{\l}{{\bf l}}
\newcommand{\p}{{\bf p}}
\newcommand{\ep}{\varepsilon}
\newcommand{\wmod}[1]{\mbox{~(mod~$#1$)}}
\renewcommand{\eqref}[1]{(\ref{#1})}
\newcommand{\shsp}{\hspace{1em}}
\newcommand{\mhsp}{\hspace{2em}}
\newcommand{\grad}{\nabla}
\newcommand{\bfL}{{\boldsymbol{\lambda}}}
\newcommand{\bfO}{{\boldsymbol{\Omega}}}
\newtheorem{prop}{Proposition}[section]
\newtheorem{lem}[prop]{Lemma}
\newtheorem{thm}{Theorem}[section]
\renewcommand{\theequation}{\thesection.\arabic{equation}}
\renewcommand{\thefigure}{\arabic{figure}}
\begin{document}

\title[Asymptotic translation lengths of point-pushing pseudo-Anosov maps]{On the minimum of asymptotic translation lengths of point-pushing pseudo-Anosov maps on one punctured Riemann surfaces}
\author[C. Zhang]{C. Zhang}
\date{January 20, 2013}
\thanks{ }

\address{Department of Mathematics \\ Morehouse College
\\ Atlanta, GA 30314, USA.}
\email{czhang@morehouse.edu}

\subjclass{Primary 32G15; Secondary 30C60, 30F60}
\keywords{Riemann surfaces, Pseudo-Anosov, Dehn twists, Curve complex,
Filling curves.}

\maketitle 

\begin{abstract}
We show that the minimum of asymptotic translation lengths of all point-pushing pseudo-Anosov maps on any one punctured Riemann surface is one.
\end{abstract}

\section{Introduction and main results}
\setcounter{equation}{0}

Let $S$ be a closed Riemann surface of genus $p$ with $n$ points removed. Assume that $3p-4+n>0$. One can associate to $S$ a curve complex $\mathcal{C}(S)$ which is equipped with a path metric $d_{\mathcal{C}}$. Let $\mathcal{C}_0(S)$ denote the set of vertices of $\mathcal{C}(S)$ that can be identified with the set of simple closed geodesics on $S$.  See Section 2 for the definitions and terminology. 

Following Farb--Leininger--Margalit \cite{F-L-M}, for any $u\in \mathcal{C}_0(S)$, and any pseudo-Anosov map $f$ of $S$, we can define $\tau_{\mathcal{C}}(f)$ as 
\begin{equation}\label{ASYM}
\tau_{\mathcal{C}}(f)=\liminf_{m\rightarrow \infty}\frac{\ d_{\mathcal{C}}\left(u,f^m(u)\right)\ }{m}. 
\end{equation}
It is known that $\tau_{\mathcal{C}}(f)$ does not depend on choices of vertices $u$ in $\mathcal{C}_0(S)$ and is called the asymptotic translation length for the action of $f$ on $\mathcal{C}(S)$. Bowditch \cite{Bo} proved that $\tau_{\mathcal{C}}(f)$ for all pseudo-Anosov maps are rational numbers. 

Let Mod$(S)$ denote the mapping class group of $S$, and let $H\subset \mbox{Mod}(S)$ be a subgroup. Denote by 
$$
L_{\mathcal{C}}(H)=\mbox{inf }\{\tau_{\mathcal{C}}(f): \ \mbox{for all pseudo-Anosov elements in } H\}.
$$
By Masur--Minsky \cite{M-M}, there is a positive lower bound for $L_{\mathcal{C}}(H)$ that depends only on $(p,n)$. For a closed surface $S$ of genus $p>1$, Theorem 1.5 of \cite{F-L-M} asserts that 
$$
L_{\mathcal{C}}\left(\mbox{Mod}(S)\right)<\frac{\ 4 \log \left( 2+\sqrt{3}   \right)\ }{p \log \left( p-\frac{1}{2}   \right)}. 
$$
This particularly implies that $L_{\mathcal{C}}\left( \mbox{Mod}(S)\right)\rightarrow 0$ as $p\rightarrow +\infty$.  The lower and upper bounds for $L_{\mathcal{C}}\left(\mbox{Mod}(S)\right)$ were improved as
$$
\frac{1}{162\left( 2p-2  \right)^2+30 \left(2p-2   \right)}< L_{\mathcal{C}}\left( \mbox{Mod}(S)\right)\leq \frac{4}{p^2+p-4}
$$
by a result of Gadre--Tsai \cite{G-T}.  

The estimations of $L_{\mathcal{C}}(H)$ for certain subgroups $H$ of $\mbox{Mod}(S)$ were also considered in \cite{F-L-M}. Let $\Gamma_0$ be the fundamental group of $S$. For any $k\geq 1$, let $\Gamma_k$ be the $k$th term of the lower central series for $\Gamma_0$. This chain of subgroups forms a filtration. Denote by $\mathscr{N}_k$ the kernel of the natural homomorphism of $\mbox{Mod}(S)$ onto Out$(\Gamma/\Gamma_k)$. Then for the sequence of the subgroups $\mathscr{N}_k$, Theorem 6.1 of \cite{F-L-M} states that for any $k$, a similar phenomenon emerges. That is, 
$$
L_{\mathcal{C}}(\mathscr{N}_k(S))\rightarrow 0\ \ \mbox{as}\ \ p\rightarrow +\infty. 
$$

In this paper, we are mainly concerned with the case in which $S$ contains only one puncture $x$. Then the subgroup $\mathscr{F} \subset \mbox{Mod}(S)$ that consists of mapping classes projecting to the trivial mapping class on $\tilde{S}:=S\cup \{x\}$ is highly non trivial and is isomorphic to the fundamental group $\pi_1(\tilde{S},x)$. It is well-known (Kra \cite{Kr}) that $\mathscr{F}$ contains infinitely many pseudo-Anosov elements, and the conjugacy class of a primitive pseudo-Anosov element of $\mathscr{F}$ can be determined by an oriented primitive filling closed geodesic $\tilde{c}$ on $\tilde{S}$ in the sense that $\tilde{c}$ intersects every simple closed geodesic on $\tilde{S}$. 

In contrast to the above estimations for $L_{\mathcal{C}}(H)$ for various subgroups $H$ of $\mbox{Mod}(S)$, in the case where $H=\mathscr{F}$, we can view $L_{\mathcal{C}}(\mathscr{F})$ as a function of $(p,n)$, and see that $L_{\mathcal{C}}(\mathscr{F})$ performs quite differently than 
$L_{\mathcal{C}}( \mbox{Mod}(S))$ and $L_{\mathcal{C}}(\mathscr{N}_k(S))$. The main purpose of this paper is to prove the following result.
\begin{thm} \label{T1}
For any type $(p,1)$ with $p>1$, $L_{\mathcal{C}}(\mathscr{F})=1$. 
\end{thm}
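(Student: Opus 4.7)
The plan is to prove $L_{\mathcal{C}}(\mathscr{F})=1$ by establishing the matching bounds $L_{\mathcal{C}}(\mathscr{F})\leq 1$ and $L_{\mathcal{C}}(\mathscr{F})\geq 1$.

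For the upper bound, I would construct a specific pseudo-Anosov $f\in\mathscr{F}$ by choosing two simple closed loops $\tilde{\alpha},\tilde{\beta}$ based at $x\in\tilde{S}$ whose union $\tilde{\alpha}\cup\tilde{\beta}$ fills $\tilde{S}$, and setting $f=P_{\tilde{\alpha}\tilde{\beta}}$; the free-homotopy class of $\tilde{\alpha}\tilde{\beta}$ is a filling closed geodesic on $\tilde{S}$, so Kra's criterion guarantees that $f$ is pseudo-Anosov. Next pick a simple closed geodesic $u\in\mathcal{C}_0(S)$ that is disjoint from a small neighbourhood of $\tilde{\alpha}$ but meets $\tilde{\beta}$ transversely. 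Since $P_{\tilde{\alpha}}$ is supported near $\tilde{\alpha}$, it fixes both $u$ and the $S$-isotopy class $P_{\tilde{\beta}}(u)$ (the latter being obtained from $u$ by a local modification inside a neighbourhood of $\tilde{\beta}$ that can be kept disjoint from $\tilde{\alpha}$), so $f(u)=P_{\tilde{\beta}}(u)$. The class $P_{\tilde{\beta}}(u)$ can then be represented by a parallel copy of $u$ on $\tilde{S}$ cobounding an annulus that contains $x$; this representative is disjoint from $u$ on $S$. Hence $d_{\mathcal{C}(S)}(u,f(u))=1$, and the triangle inequality yields $\tau_{\mathcal{C}}(f)\leq 1$.

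For the lower bound, I would show that $\tau_{\mathcal{C}}(f)\geq 1$ for every pseudo-Anosov $f\in\mathscr{F}$ by arguing that the orbit $\{f^m(u)\}_{m\geq 0}$ traces a genuine geodesic ray in $\mathcal{C}(S)$, so that $d_{\mathcal{C}(S)}(u,f^m(u))=m$ for all $m\geq 1$. This reduces inductively to a no-shortcut principle: any curve $w\in\mathcal{C}_0(S)$ witnessing $d_{\mathcal{C}(S)}(u,f^{m+1}(u))<m+1$, when traced back through the iterated pair-of-pants cobordisms on $S$ encoding the successive point-pushings along $\tilde{c}$, would produce a simple closed curve on $\tilde{S}$ disjoint from the geodesic representative of $\tilde{c}$, contradicting Kra's filling criterion. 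Bowditch's rationality theorem then rules out intermediate rational values of $\tau_{\mathcal{C}}(f)$ strictly between $0$ and $1$, yielding $\tau_{\mathcal{C}}(f)=1$.

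The main obstacle is making the no-shortcut step in the lower bound rigorous. Showing uniformly over all pseudo-Anosov $f\in\mathscr{F}$ that the unit-speed orbits realise genuine curve-complex geodesics (rather than merely quasi-geodesics admitting a smaller $\tau$) is delicate; I would attack it by combining subsurface projection to annular complexes around subarcs of iterated copies of $\tilde{c}$ with the explicit description of $\mathscr{F}\cong\pi_1(\tilde{S},x)$ from the Birman exact sequence, so as to convert any purported shortcut in $\mathcal{C}(S)$ into a simple closed curve on $\tilde{S}$ violating the filling condition on $\tilde{c}$.
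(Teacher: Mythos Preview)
Your upper bound aims at the right conclusion but is more convoluted than necessary, and one step is unjustified. You claim that $P_{\tilde\alpha}$ fixes the $S$-isotopy class of $P_{\tilde\beta}(u)$ because the latter ``can be kept disjoint from $\tilde\alpha$''; but $\tilde\alpha$ and $\tilde\beta$ share the basepoint $x$, and the point-push $P_{\tilde\beta}$ drags $u$ through a neighbourhood of $x$, so there is no reason the resulting curve avoids the support of $P_{\tilde\alpha}$. The paper sidesteps this entirely: choose any filling geodesic $\tilde c$ on $\tilde S$ and a simple closed geodesic $\tilde u$ with $i(\tilde u,\tilde c)=1$, take $f$ to be the point-push along $\tilde c$, and cite the fact (from the author's earlier work) that $\{u,f(u)\}$ then bounds an $x$-punctured cylinder. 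This gives $i(u,f(u))=0$ and hence $\tau_{\mathcal C}(f)\le 1$ immediately.

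The lower bound is where the substance lies, and here your proposal has a genuine gap. First, the appeal to Bowditch is simply wrong: rationality of $\tau_{\mathcal C}(f)$ in no way excludes values such as $1/2$ or $2/3$, so it cannot be used to promote a putative bound to an equality. Second, and more seriously, your ``no-shortcut principle''---that a short path in $\mathcal C(S)$ would produce a simple closed curve on $\tilde S$ disjoint from $\tilde c$---is asserted rather than proved, and you yourself flag it as the main obstacle. Nothing in your sketch (subsurface projections, annular complexes) indicates how an arbitrary vertex on a geodesic in $\mathcal C(S)$, which need not lie in a single fibre of $\mathcal C(S)\to\mathcal C(\tilde S)$, would descend to a curve on $\tilde S$ missing $\tilde c$.

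The paper's route is entirely different and is the content of Theorem~1.2. One works in the universal cover $\varrho:\mathbf H\to\tilde S$ with deck group $G$, takes the hyperbolic element $g\in G$ with $g^*=f$, and associates to each non-preperipheral vertex $u_j$ a configuration $(\tau_j,\Omega_j,\mathscr U_j)$: a region $\Omega_j\in\mathscr R_{\tilde u_j}$ together with its partially ordered family of complementary half-planes. The axis of $g$ determines a nested sequence of half-planes $\Delta_k'=g^k(\Delta_0^*)$, and one says $\Omega_j$ is ``at level $k$'' or ``above level $k$'' according to its position relative to $\Delta_k'$. The core inductive lemma is that along any geodesic path $u_0,u_1,\dots,u_s,u_m$ in $\mathcal C_1(S)$, the region $\Omega_j$ must be at or above level $j$ for every $j\le m-2$; combined with a lemma that being at or above level $m-2$ forces $d_{\mathcal C}(u_j,u_m)\ge 2$, this gives $s\ge m-1$, i.e.\ $d_{\mathcal C}(u_0,f^m(u_0))\ge m$ for $m\ge 3$. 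This half-plane bookkeeping in $\mathbf H$ is the mechanism that converts disjointness in $\mathcal C(S)$ into a controlled descent through levels, and it is what your proposal is missing.
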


We may find a filling closed geodesic $\tilde{c}$ on $\tilde{S}$ and a vertex $\tilde{u}\in \mathcal{C}_0(\tilde{S})$ so that $\tilde{u}$ intersects $\tilde{c}$ only once. Let 
$u\in \mathcal{C}_0(S)$ be the vertex obtained from $\tilde{u}$ by removing $x$. Let $f\in \mathscr{F}$ be a pseudo-Anosov element obtained from pushing $x$ along $\tilde{c}$ (see Theorem 2 of \cite{Kr}). From \cite{CZ5},  we know that  $\{u,f(u)\}$ forms the  boundary of an $x$-punctured cylinder on $S$. This means that $i(u,f(u))=0$, where and below $i(\alpha, \beta)$ denotes the geometric intersection number between two vertices $\alpha,\beta\in \mathcal{C}_0(S)$. Note, since $f$ is a homeomorphism of $S$, that $i(f(u),f^2(u))=0$. Hence $f(u)$ is disjoint from both $u$ and $f^2(u)$. By the definition of $d_{\mathcal{C}}$, we have $d_{\mathcal{C}}\left(u, f^2(u)\right)\leq 2$. Hence from the construction of $f$, $u$ intersects $f^2(u)$, which implies that $d_{\mathcal{C}}\left(u, f^2(u)\right)>1$. We conclude that $d_{\mathcal{C}}\left(u, f^2(u)\right)=2$. Now we modify the argument of \cite{F-L-M}. Since $f$ is a homeomorphism of $S$, we obtain 
$$
d_{\mathcal{C}}\left(f^{2m}(u), f^{2m-2}(u)\right)=2\ \  \mbox{for}\ \  m=1,2,\cdots.
$$
Now the triangle inequality yields 
$d_{\mathcal{C}}\left(f^{2m}(u), u\right)\leq 2m$, which says 
$$
\frac{\ d_{\mathcal{C}}\left(f^{2m}(u), u\right)\ }{2m}\leq 1
$$ 
for all positive integers $m$. It follows from (\ref{ASYM}) that $\tau_{\mathcal{C}}(f)\leq 1$ and thus that  $L_{\mathcal{C}}(\mathscr{F})\leq 1$. The assertion that $L_{\mathcal{C}}(\mathscr{F})\geq 1$ follows from the following result.

\begin{thm}\label{T2}
Let $S$ be of type $(p,1)$ with $p>1$ and let $f\in \mathscr{F}$ be a pseudo-Anosov element. Then there is $u\in \mathcal{C}_0(S)$ such that for any integer $m$ with $|m|\geq 3$, we have 
\begin{equation}\label{EQ}
d_{\mathcal{C}}\left( u, f^m(u)  \right)\geq |m|. 
\end{equation}
\end{thm}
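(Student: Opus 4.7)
The plan is to choose $u$ exactly as in the discussion preceding the theorem: pick $\tilde{u}\in\mathcal{C}_0(\tilde{S})$ with $i(\tilde{u},\tilde{c})=1$ and let $u\in\mathcal{C}_0(S)$ be the corresponding vertex. Iterating the $x$-punctured-cylinder result of \cite{CZ5} gives $d_\mathcal{C}(f^k(u),f^{k+1}(u))=1$ for every integer $k$, so the orbit $u,f(u),f^2(u),\ldots,f^m(u)$ is a path of length $|m|$ in $\mathcal{C}(S)$, which already yields $d_\mathcal{C}(u,f^m(u))\leq|m|$. The aim is to show this path is actually a geodesic for $|m|\geq 3$.

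The strategy is to produce an integer-valued ``height'' function $\ell:\mathcal{C}_0(S)\to\mathbb{Z}$ satisfying $\ell(f^k(u))=k$ and a Lipschitz estimate $|\ell(v)-\ell(w)|\leq 1$ whenever $i(v,w)=0$. Given such $\ell$, for any geodesic $u=v_0,v_1,\ldots,v_n=f^m(u)$ one would obtain
\[
|m|=|\ell(v_n)-\ell(v_0)|\leq\sum_{i=0}^{n-1}|\ell(v_{i+1})-\ell(v_i)|\leq n,
\]
giving the required lower bound $d_\mathcal{C}(u,f^m(u))\geq|m|$. I would define $\ell$ by lifting curves to the annular cover $\widehat{S}\to S$ corresponding to $\tilde{u}$ and reading off the height of each lift relative to the lifts of the puncture $x$. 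The calibration $i(\tilde{u},\tilde{c})=1$ is exactly what forces the point-pushing $f$ to translate heights by a single unit, so that $\ell(f^k(u))=k$; for vertices not in the $\mathscr{F}$-orbit of $u$ the height is read off (possibly as an interval of admissible values) from the same cover.

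The main obstacle is establishing the Lipschitz bound with the sharp constant $1$, since standard Masur--Minsky annular subsurface projection is only coarsely Lipschitz and would therefore give only $d_\mathcal{C}(u,f^m(u))\geq|m|-C$ for some additive constant. The sharp constant has to be extracted from the detailed geometry of the cobounding cylinders provided by \cite{CZ5}: two disjoint curves $v,w$ in $S$ either share a complementary component with respect to a suitable $f^k(u)$, in which case their heights agree, or they straddle exactly one punctured cylinder, in which case their heights differ by exactly one, because the chain of cylinders tiles a neighbourhood of $\tilde{u}$ one winding of $x$ at a time. The hypothesis $|m|\geq 3$ is where this tiling argument has enough room to run: for $|m|\leq 2$ the two ends of the geodesic are so close that the cylinder picture degenerates and boundary effects can cost up to two units, which is why the small cases $d_\mathcal{C}(u,f(u))=1$ and $d_\mathcal{C}(u,f^2(u))=2$ are handled directly in the paragraph preceding the theorem rather than by this invariant.
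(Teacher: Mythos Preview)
Your strategy is quite different from the paper's and, as written, has a real gap at the crucial step. The entire argument rests on a globally defined height $\ell:\mathcal{C}_0(S)\to\mathbb{Z}$ that is $1$-Lipschitz along edges, but you do not construct it: you say the value is ``read off (possibly as an interval of admissible values)'' from an annular cover without giving the rule, and you assert without proof a dichotomy---disjoint $v,w$ either share a complementary component relative to some $f^k(u)$ or straddle exactly one punctured cylinder---that would have to hold for \emph{arbitrary} disjoint vertices, not just those in the $f$-orbit of $u$. You yourself observe that Masur--Minsky annular projection is only coarsely Lipschitz; the passage from coarse to sharp constant $1$ is precisely the content of the theorem, and your sketch does not supply it. Your reading of the hypothesis $|m|\geq 3$ is also off: if $\ell$ were genuinely $1$-Lipschitz the bound would hold for all $m$ (and indeed $d_\mathcal{C}(u,f(u))=1$, $d_\mathcal{C}(u,f^2(u))=2$ are consistent with it). The restriction in the statement is there only because the cases $m=3,4$ were established in earlier papers \cite{CZ3,CZ5} and the present argument picks up at $m\geq 5$; there is no boundary degeneration.

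For comparison, the paper does not attempt a global Lipschitz invariant. It chooses $\tilde{u}_0$ with $i(\tilde{u}_0,\tilde{c})\geq 2$ (not $=1$), works in the universal cover $\mathbf{H}\to\tilde{S}$, and to each vertex along an \emph{arbitrary} geodesic path $[u_0,u_1,\ldots,u_s,u_m]$ attaches a configuration $(\tau_j,\Omega_j,\mathscr{U}_j)$ of a fundamental region and its complementary half-planes. A ``level'' is defined only relative to the nested half-planes $\Delta_k'=g^k(\Delta_0^*)$ swept out by the deck transformation $g$ corresponding to $f$. An induction using Lemma~\ref{L2.1} and the half-plane combinatorics of Lemmas~\ref{L1}--\ref{L3} shows $\Omega_j$ stays above or at level $j$ for $j\leq m-2$, forcing $d_\mathcal{C}(u_{m-2},u_m)\geq 2$ and hence $s\geq m-1$. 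So the paper's ``level'' is a path-dependent notion controlled directly by disjointness of half-planes in $\mathbf{H}$, which sidesteps the need for a sharp global Lipschitz bound.
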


\noindent {\em Remark. } Theorem \ref{T2} is compared with Proposition 3.6 of \cite{M-M}, which states that there is a constant $c=c(p,n)$, $c>0$, such that $d_{\mathcal{C}}(u, f^m(u))\geq c|m|$ for all pseudo-Anosov maps $f$ and all $u\in \mathcal{C}_0(S)$. The quantitative estimation for $c$ is, however, largely unknown. 

\noindent {\em Outline of Proof. }  Let $\mathbf{H}$ be a hyperbolic plane and $\varrho:\mathbf{H}\rightarrow \tilde{S}$ the universal covering map with a covering group $G$. Then $G$ is purely hyperbolic. There is an essential hyperbolic element $g\in G$ that corresponds to $f$ (Theorem 2 of \cite{Kr}). 


In the case where $S$ contains only one puncture $x$, all vertices $u$ in $\mathcal{C}_0(S)$ are non-preperipheral, in the sense that $u$ is homotopic to a non-trivial simple closed geodesic on $\tilde{S}$ as $x$ is filled in. Thus,
for each  vertex $u_0\in \mathcal{C}_0(S)$, there defines a configuration $(\tau_0, \Omega_0, \mathscr{U}_0)$ that corresponds to $u_0$. See Section 2 for expositions. 

Note that $\tau_{\mathcal{C}}(f)$ does not depend on choices of $u\in \mathcal{C}_0(S)$. A non-preperipheral vertex $u_0\in \mathcal{C}_0(S)$ can be selected so that $\Omega_0\cap \mbox{axis}(g)\neq \emptyset$ and $i(\varrho(\mbox{axis}(g)), \tilde{u})\geq 2$, where we use the similar notation $i(\tilde{c},\tilde{u})$ to denote the intersection number between a vertex $\tilde{u}$ and a filling curve $\tilde{c}$ (we always assume that $\tilde{u}$ intersects $\tilde{c}$ at non self-intersection points of $\tilde{c}$ by performing a small perturbation if necessary). For $m\geq 3$, let $u_m$ be the geodesic homotopic to the image curve $f^m(u_0)$. Suppose that 
\begin{equation}\label{PATH}
[u_0, u_1, \cdots, u_s, u_m]
\end{equation}
is an arbitrary geodesic path in the 1-skeleton of $\mathcal{C}(S)$ that connects $u_0$ and $u_m$ with a minimum number of sides. Then all $u_j$, $1\leq j\leq s$, are non-preperipheral, which allows us to obtain the configurations $(\tau_j, \Omega_j, \mathscr{U}_j)$ determined by the vertices $u_j$. Note that the sequence $\mathbf{H}\backslash \Delta_j'$ (See Fig. 1 and (\ref{COM}) for the construction of $\Delta_j'$) monotonically moves down towards the attracting fixed point $A$ of $g$, and the optimal scenario is so does the sequence $\Omega_j$. In case this occurs, we will show that the average rate of the movement of $\Omega_j$ towards $A$ is no faster than that of $\mathbf{H}\backslash \Delta_j'$. This leads to that $\Omega_j\cap \Delta_m'\neq \emptyset$ for $j\leq m-2$, which will imply that $u_j$ intersects $u_m$ as long as $j\leq m-2$. It follows that $s\geq m-1$ and thus that $d_{\mathcal{C}}\left( u_0, u_m  \right)\geq m$. If $m$ is negative and $m\leq -3$, the proof is similar. 

\section{Curve complex and tessellations in hyperbolic plane}
\setcounter{equation}{0}

Let $S$ be of type $(p,n)$. Due to Harvey \cite{H}, one can define the curve complex $\mathcal{C}(S)$ of dimension $3p-4+n$ as the following simplicial complex: vertices of $\mathcal{C}(S)$ are simple closed geodesics, and $k$-dimensional simplicies of $\mathcal{C}(S)$ are collections of $(k+1)$-tuples $\{u_0, u_1,\ldots, u_k\}$ of disjoint simple closed geodesics on $S$. Let $\mathcal{C}_k(S)$ denote the $k$-skeleton of $\mathcal{C}(S)$. We then introduce a metric $d_{\mathcal{C}}$, called the path metric, in the following way. First we make each simplex Euclidean with side length one, then for any vertices $u,v\in \mathcal{C}_0(S)$, we declare the distance $d_{\mathcal{C}}(u,v)$ between $u$ and $v$ to be the smallest number of edges connecting $u$ and $v$.  The curve complex $\mathcal{C}(\tilde{S})$ is similarly defined. 

Throughout the rest of the paper we assume that $S$ is a closed Riemann surface minus one point $x$. 
By forgetting the puncture $x$, we can define a fibration tructure $\mathcal{C}(S)\rightarrow \mathcal{C}(\tilde{S})$ that admits a global section (since any vertex in  $\mathcal{C}_0(\tilde{S})$ can be naturally thought of as a vertex in $\mathcal{C}_0(S)$). For each $\tilde{\varepsilon}\in \mathcal{C}_0(\tilde{S})$, let $F_{\tilde{\varepsilon}}$ be the fiber over $\tilde{\varepsilon}$ that consists of $u\in \mathcal{C}_0(S)$ for which $\tilde{u}=\tilde{\varepsilon}$, where $\tilde{u}$ is homotopic to $u$ if $u$ is viewed as curves on $\tilde{S}$.  

Fix $\tilde{\varepsilon}\in \mathcal{C}_0(\tilde{S})$. Let $\varrho^{-1}(\tilde{\varepsilon})$ denote the collection of geodesics $\hat{\varepsilon}$ in $\mathbf{H}$ such that $\varrho(\hat{\varepsilon})=\tilde{\varepsilon}$. Since $\tilde{\varepsilon}$ is simple, all geodesics in $\varrho^{-1}(\tilde{\varepsilon})$ are mutually disjoint. It is also clear that   $\varrho^{-1}(\tilde{\varepsilon})$ gives rise to a partition of $\mathbf{H}$. Let $\mathscr{R}_{\tilde{\varepsilon}}$ be the set of components of $\mathbf{H}\backslash \varrho^{-1}(\tilde{\varepsilon})$. By Lemma 2.1 of \cite{CZ6}, there is a bijection $\chi: \mathscr{R}_{\tilde{\varepsilon}}\rightarrow F_{\tilde{\varepsilon}}$. Each $\Omega\in \mathscr{R}_{\tilde{\varepsilon}}$ tessellates the hyperbolic plane $\mathbf{H}$ under the action of $G$. See \cite{CZ6} for more information on the tessellation. 
 
Let $\Omega\in \mathscr{R}_{\tilde{\varepsilon}}$. The Dehn twist $t_{\tilde{\varepsilon}}$ can be lifted to a map $\tau:\mathbf{H}\rightarrow \mathbf{H}$ so that the restriction $\tau|_{\Omega}=\mbox{id}$. Observe that the complement of the closure of $\Omega$ is a disjoint union of half-planes. Each such half plane $\Delta$ includes infinitely many geodesics in $\varrho^{-1}(\tilde{\varepsilon})$, and no geodesics in $\varrho^{-1}(\tilde{\varepsilon})$ are contained in $\Omega$. 
Thus, there defines infinitely many half planes contained in $\Delta$. Let $\mathscr{U}$ be the collection of all such half planes. Obviously $\mathscr{U}$ is a partially ordered set defined by inclusion. Maximal elements of $\mathscr{U}$ are called first order elements ($\Delta$ is one of them), elements of $\mathscr{U}$ that are included in a maximal element but are not included in any other elements of $\mathscr{U}$ are called second order elements, and so on. We see that for any element $\Delta_n$ of order $n$ with $n\geq 2$, there is a unique element $\Delta_{n-1}$ of order $n-1$ such that $\Delta_n\subset \Delta_{n-1}$. Conversely, for each $\Delta_{n-1}\in \mathscr{U}$ of order $n-1$, there are infinitely many disjoint elements $\Delta_n\in \mathscr{U}$ of order $n$ that are contained in $\Delta_{n-1}$. 

Each maximal element $\Delta$ is an invariant half plane under the action of $\tau$; and element $\Delta'\subset \Delta$ of any other order is not $\tau$-invariant, but $\tau$ sends $\Delta'$ to an element $\Delta''\subset \Delta$ of the same order. The map $\tau$ is quasiconformal and extends to a quasisymmetric map on $\mathbf{S}^1$. See \cite{CZ4} for more details. 

Let $\Omega\in \mathscr{R}_{\tilde{\varepsilon}}$ be such that $\chi(\Omega)=u$ for some $u\in \mathcal{C}_0(S)$. We call the triple $(\tau,\Omega,\mathscr{U})$ the configuration corresponding to $u$. Write $\tau_u=\tau$, $\Omega_u=\Omega$ and $\mathscr{U}_n=\mathscr{U}$ to emphasize this correspondence. 

For $i=1,2$, let $u_i\in \mathcal{C}_0(S)$, and let $(\tau_i, \Omega_i,\mathscr{U}_i)$ be the configurations corresponding to $u_i$. If $\tilde{u}_1=\tilde{u}_2=\tilde{\varepsilon}$, i.e., $u_i\in F_{\tilde{\varepsilon}}$, then $\Omega_i\in \mathscr{R}_{\tilde{\varepsilon}}$. Since $\mathscr{R}_{\tilde{\varepsilon}}$ is $G$-invariant, there is $h\in G$ such that $h(\Omega_1)=\Omega_2$. Obviously, $\Omega_1=\Omega_2$ if and only if $h=\mbox{id}$. Suppose now that $\Omega_1\neq \Omega_2$. 
Then $\Omega_1$ is disjoint from $\Omega_2$, and there is a path $\Gamma$ in $F_{\tilde{\varepsilon}}$ connecting $u_1=\chi(\Omega_1)$ and $u_2=\chi(\Omega_2)$ (Proposition 2.4 of \cite{CZ6}). Unfortunately, there is no guarantee that $\Gamma$ is a geodesic path in $\mathcal{C}_1(S)$. When $\Omega_1$ and $\Omega_2$ are adjacent, i.e., $\bar{\Omega}_1\cap \bar{\Omega}_2$ is a geodesic  in $\varrho^{-1}(\tilde{\varepsilon})$, then it can be shown that $\{\chi(\Omega_1), \chi(\Omega_2)\}$ forms the boundary of an $x$-punctured cylinder on $S$. In particular, we assert that $d_{\mathcal{C}}(\chi(\Omega_1), \chi(\Omega_2))=1$. See \cite{CZ6} for more details.  

In the case where $\tilde{u}_1\neq \tilde{u}_2$, the relationship between $\mathscr{R}_{\tilde{u}_1}$ and $\mathscr{R}_{\tilde{u}_2}$ is more complicated. However, if there are $u_1\in F_{\tilde{u}_1}$ and $u_2\in F_{\tilde{u}_2}$ such that $u_1$ is disjoint from $u_2$, then $\tilde{u}_1$ is disjoint from $\tilde{u}_2$, which implies that $\varrho^{-1}(\tilde{u}_1)$ is disjoint from $\varrho^{-1}(\tilde{u}_2)$. We have the following result which was proved in \cite{CZ5}. 
\begin{lem}\label{L2.1}
Suppose that $u_1, u_2$ are disjoint with $\tilde{u}_1\neq \tilde{u}_2$. Then $\Omega_1\cap \Omega_2\neq \emptyset$. Moreover, each maximal element of $\mathscr{U}_1$ contains or is contained in a maximal element of $\mathscr{U}_2$, and vise versa.  
\end{lem}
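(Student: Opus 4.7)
The plan is as follows. Since $u_1$ and $u_2$ are disjoint simple closed geodesics on $S$, filling in the puncture $x$ preserves disjointness, so $\tilde{u}_1$ and $\tilde{u}_2$ can be realized as disjoint simple closed geodesics on $\tilde{S}$. Thus $\varrho^{-1}(\tilde{u}_1)$ and $\varrho^{-1}(\tilde{u}_2)$ are disjoint collections of geodesics in $\mathbf{H}$. For the first assertion, I would invoke the construction of $\chi$ from \cite{CZ6}: fixing a base lift $\tilde{x}_0\in \varrho^{-1}(x)$ of the puncture, $\chi^{-1}(u)$ is the unique component of $\mathbf{H}\setminus \varrho^{-1}(\tilde{u})$ containing $\tilde{x}_0$. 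Consequently both $\Omega_1$ and $\Omega_2$ contain $\tilde{x}_0$, so $\Omega_1\cap \Omega_2\neq \emptyset$.

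For the second assertion, let $\Delta$ be a maximal element of $\mathscr{U}_1$, bounded by a geodesic $\ell\in \varrho^{-1}(\tilde{u}_1)$ lying on $\partial\bar{\Omega}_1$. Because $\ell$ is disjoint from $\varrho^{-1}(\tilde{u}_2)$, it lies in exactly one component of $\mathbf{H}\setminus \varrho^{-1}(\tilde{u}_2)$, which is either $\Omega_2$ or a first-order element of $\mathscr{U}_2$. In the sub-case $\ell\subset \Omega_2$, the geodesic convexity of $\Omega_2$ forces the ideal endpoints of $\ell$ to lie in $\partial_\infty\Omega_2\subset \mathbf{S}^1$, a Cantor set since $\Omega_2$ is bounded by infinitely many geodesics of $\varrho^{-1}(\tilde{u}_2)$. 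Because arcs in $\mathbf{S}^1$ have non-empty interior while Cantor sets do not, the ideal arc of $\Delta$ cannot lie in $\partial_\infty\Omega_2$, and hence contains a complementary arc of $\partial_\infty\Omega_2$; the corresponding first-order half-plane $\Delta^{(2)}\in \mathscr{U}_2$ then satisfies $\Delta^{(2)}\subset \Delta$.

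In the opposite sub-case $\ell\subset \Delta^{(2)}$ for some maximal $\Delta^{(2)}\in \mathscr{U}_2$ with boundary $\ell^{(2)}$, the two half-planes cut out by $\ell$ correspond to the two arcs on $\mathbf{S}^1$ separated by $\ell$'s ideal endpoints; call them $H$ and $H'$, where $H\subset \Delta^{(2)}$ (the inner arc) and $H'$ contains $\ell^{(2)}$ (hence $\Omega_2\subset H'$). Since $\Omega_1$ is connected and does not cross $\ell$, it lies entirely in $H$ or entirely in $H'$. The alternative $\Omega_1\subset H\subset \Delta^{(2)}$ is excluded by $\Omega_1\cap \Omega_2\neq \emptyset$ together with $\Delta^{(2)}\cap \Omega_2=\emptyset$; hence $\Omega_1\subset H'$, and $\Delta$, lying on the opposite side of $\ell$ from $\Omega_1$, satisfies $\Delta\subset H\subset \Delta^{(2)}$. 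The symmetric statement is obtained by exchanging the roles of $1$ and $2$.

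The main obstacle is the case analysis in the second assertion, particularly the second sub-case: determining which side of $\ell$ the region $\Omega_1$ occupies is precisely where the first assertion $\Omega_1\cap \Omega_2\neq \emptyset$ plays its essential role, forcing the containment $\Delta\subset \Delta^{(2)}$ rather than the inconsistent $\Omega_1\subset \Delta^{(2)}$.
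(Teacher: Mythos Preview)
The paper does not prove this lemma itself; it simply cites \cite{CZ5}. So there is no in-paper argument against which to compare, and your proposal must be judged on its own.

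Your argument for the second assertion is sound, and you correctly identify where the first assertion is needed. The gap is in the first assertion. Your description of $\chi^{-1}$---that $\chi^{-1}(u)$ is the component of $\mathbf H\setminus\varrho^{-1}(\tilde u)$ containing a fixed lift $\tilde x_0$---depends only on $\tilde u$, not on $u$, and would therefore be constant on each fiber $F_{\tilde\varepsilon}$; this is incompatible with $\chi\colon\mathscr R_{\tilde\varepsilon}\to F_{\tilde\varepsilon}$ being a bijection. (Concretely: if $u_0$ is the geodesic $\tilde\varepsilon$ viewed on $S$ and $u_1$ is $\tilde\varepsilon$ pushed once across $x$, your rule assigns both the same region, yet $u_0\neq u_1$ in $F_{\tilde\varepsilon}$.) What one can say is that the component of $\mathbf H\setminus\varrho^{-1}(u)$---using the actual curve $u\subset\tilde S$, not its geodesic representative $\tilde u$---that contains $\tilde x_0$ determines $\Omega_u$ after a straightening isotopy, but $\tilde x_0$ need not lie in $\Omega_u$ once that isotopy has been performed. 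Your idea can be repaired along these lines: since $u_1$ and $u_2$ are disjoint on $S$, the families $\varrho^{-1}(u_1)$ and $\varrho^{-1}(u_2)$ can be straightened to $\varrho^{-1}(\tilde u_1)$ and $\varrho^{-1}(\tilde u_2)$ by a \emph{single} ambient isotopy of $\mathbf H$, and the image of $\tilde x_0$ under that common isotopy then witnesses $\Omega_1\cap\Omega_2\neq\emptyset$.
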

\noindent {\em Remark. } If a maximal element $\Delta_1\in \mathscr{U}_1$ contains a maximal element of $\mathscr{U}_2$, then $\Delta_1$ contains infinitely many maximal elements of $\mathscr{U}_2$; but if $\Delta_1\in \mathscr{U}_1$ is contained in a maximal element $\Delta_2$ of $\mathscr{U}_2$, then such a $\Delta_2$ is unique. The same is true for maximal elements of $\mathscr{U}_2$.  

\medskip

By assumption, $S$ contains only one puncture, which means that any mapping class must fix the puncture. It turns out that the $x$-pointed mapping class group (which is defined as a group that consists of mapping classes fixing $x$) is the same as the ordinary mapping class group Mod$(S)$. It is well-known (Theorem 4.1 and Theorem 4.2 of Birman \cite{Bir}) that there exists an exact sequence 
\begin{equation}\label{EXACT}
0 \longrightarrow \pi_1(\tilde{S},x)\longrightarrow \mbox{Mod}(S) \longrightarrow \mbox{Mod}(\tilde{S}) \longrightarrow 0,
\end{equation}
which defines an injective map $\psi: G\rightarrow \mbox{Mod}(S)$ (since $G$ is canonically isomorphic to $\pi_1(\tilde{S},x)$). Let $Q(G)$ be the group of quasiconformal automorphisms of $\mathbf{H}$. We introduce an equivalence relation $``\sim"$ in $Q(G)$ as follows. Two element $w_1,w_2\in  Q(G)$ are declared to be equivalent (write as $w_1\sim w_2$) if $w_1=w_2$ on $\partial \mathbf{H}=\mathbf{S}^1$. The quotient group $Q(G)/\! \sim$ is isomorphic to 
$\mbox{Mod}(S)$ via a ``Bers isomorphism" $\varphi$ \cite{Bers1}. Notice that $G$ is naturally regarded as a normal subgroup of $Q(G)/\! \sim$, $\varphi$ restricts to the injective map $\psi$ defined by (\ref{EXACT}), and we have $\varphi(G)=\psi(G)=\mathscr{F}$. For each element $h\in G$, let $h^*\in \mathscr{F}\subset \mbox{Mod}(S)$ denote the mapping class $\varphi(h)=\psi(h)$.  

\section{Partitions and regions in hyperbolic plane determined by vertices}
\setcounter{equation}{0}

Let $f\in \mathscr{F}$ be a pseudo-Anosov element. By Theorem 2 of \cite{Kr}, there is $g\in G$ such that $g^*=f$ and $g$ is an essential hyperbolic element, which means that the projection $\tilde{c}:=\varrho(\mbox{axis}(g))$ is an oriented filling closed geodesic on $\tilde{S}$, where $\mbox{axis}(g)$ denotes the axis of $g$ which is an invariant geodesic in $\mathbf{H}$ under the action of $g$. 

Choose $\tilde{u}_0\in \mathcal{C}_0(\tilde{S})$ so that $i(\tilde{u}_0,\tilde{c})\geq 2$ (there are infinitely many such $\tilde{u}_0$). Let $\Omega_0\in \mathscr{R}_{\tilde{u}_0}$ be such that $\Omega_0\cap \mbox{axis}(g)\neq \emptyset$. Then $\Omega_0$ determines a configuration $(\tau_0, \Omega_0,\mathscr{U}_0)$ that corresponds to a vertex $\chi(\Omega_0)=u_0\in F_{\tilde{u}_0}\subset \mathcal{C}_0(S)$.

By Lemma 3.1 of \cite{CZ5}, $\mbox{axis}(g)$ can not be completely included in $\Omega_0$, which means that there are maximal elements $\Delta_0,\Delta_0^*\in \mathscr{U}_0$ such that $\mbox{axis}(g)$ crosses both $\Delta_0$ and $\Delta_0^*$.  We may assume that $\Delta_0$ and $\Delta_0^*$ cover attracting and repelling fixed points of $g$, respectively. $\Delta_0$ and $\Delta_0^*$ are shown in Fig. 1. 

For $m\geq 3$, let $u_m$ denote the geodesic homotopic to the image of $u_0$ under the map $f^m$. Then $u_m$ is also a non-preperipheral geodesic and 
$$
(\tau_m,\Omega_m,\mathscr{U}_m):=(g^m\tau_0 g^{-m}, g^m(\Omega_0), g^m(\mathscr{U}_0))
$$ 
is the configuration corresponding to $u_m$. In particular, $\Delta_m':=g^m(\Delta_0^*)$ is a maximal element of  $\mathscr{U}_m$ that covers the repelling fixed point $B$ of $g$. $\Delta_m'$ is also drawn in Fig. 1.

\bigskip
 
\unitlength 1mm 
\linethickness{0.4pt}
\ifx\plotpoint\undefined\newsavebox{\plotpoint}\fi 


 In what follows, we use the symbol $\overline{P_iQ_i}$ to denote the geodesic in $\mathbf{H}$ connecting points $P_i$ and $Q_i$ on $\mathbf{S}^1$. Also, for any two non-antipodal points $X,Y\in \mathbf{S}^1$, let $(XY)$ denote the unoriented smaller arc on $\mathbf{S}^1$ connecting $X$ and $Y$. Likewise, we use $(XZ1\cdots Z_nY)$ to denote the arc on $\mathbf{S}^1$ that connects $X$ and $Y$ and passes through points $Z_1,\cdots, Z_n$ in order on $\mathbf{S}^1$. 

We thus have $\overline{P_0Q_0}=\partial \Delta_0$.  Denote by
\begin{equation}\label{COM}
 \Delta_j'=g^j(\Delta_0^*) \ \ \mbox{for}\ j=1,2,\cdots, m,
\end{equation}
and let $\overline{P_jQ_j}=\partial \Delta_j'$. By inspecting Fig. 1,  we find that for $1\leq j\leq m-1$,
\begin{equation}\label{JHG}
g(\overline{P_jQ_j})=\overline{P_{j+1}Q_{j+1}}
\end{equation}
and that $\overline{P_jQ_j}$ is disjoint from $\overline{P_{j+1}Q_{j+1}}$. Furthermore, for $1\leq j\leq m-2$, 
\begin{equation}\label{JHG1}
g(P_iP_{i+1})=(P_{i+1}P_{i+2})\ \ \mbox{and}\ \  g(Q_iQ_{i+1})=(Q_{i+1}Q_{i+2}).
\end{equation}
It is also clear that $\overline{P_jQ_j}$ lies above $\overline{P_kQ_k}$ whenever $k>j\geq 1$. Since $i(\tilde{c},\tilde{u})\geq 2$, we assert that $\overline{P_0Q_0}$ lies above $\overline{P_1Q_1}$ ($\overline{P_0Q_0}=\overline{P_1Q_1}$ if and only if $i(\tilde{u},\tilde{c})=1$). See Fig. 1. All these geodesics $\overline{P_jQ_j}$ give rise to a partition of the hyperbolic plane $\mathbf{H}$. Note that $\Omega_0$ is the complement of all maximal elements of $\mathscr{U}_1$. We have $\Omega_0\subset \mathbf{H}\backslash (\Delta_0\cup \Delta_0^*)$ and $\Omega_m\subset \mathbf{H}\backslash \Delta_m'$.
 
 Suppose that a geodesic path  (\ref{PATH}) in $\mathcal{C}_1(S)$ connects $u_0$ and $u_m$, which tells us that $d_{\mathcal{C}}(u_j,u_{j+1})=1$ for $j=0,\cdots,s-1$, and $d_{\mathcal{C}}(u_s,u_{m})=1$. We need to show that $s\geq m-1$.
 
Since all $u_j$ are non-preperipheral, we can obtain the configurations $(\tau_j,\Omega_j,\mathscr{U}_j)$ corresponding to those $u_j$. Fix $k$ with $1\leq k\leq s$. A region $\Omega_j$, $1\leq j\leq s$, is called to be located at level $k$ if $\Delta_j=\Delta_k'$ for some maximal element $\Delta_j\in \mathscr{U}_j$. Similarly, $\Omega_j$ is called to be located above level $k$ if $\Omega_j\cap \Delta_k'\neq \emptyset$. Fig. 2 demonstrates the situation where $\Omega_j$ is located at level $k$, while Fig. 3, 4, 5 and 6 are all possible cases where $\Omega_j$ are located above level $k$. 

\bigskip

\unitlength 1mm 
\linethickness{0.4pt}
\ifx\plotpoint\undefined\newsavebox{\plotpoint}\fi 


By Lemma 3.1 of \cite{CZ5}, $\mbox{axis}(g)$ is not included in any $\Omega_j$. That is, either $\mbox{axis}(g)$ is contained in a maximal element of $\mathscr{U}_j$, or $\mbox{axis}(g)$ intersects $\partial \Delta_j$ and  $\partial \Delta_j^*$
for maximal elements $\Delta_j$ and $\Delta_j^*$ of $\mathscr{U}_j$. In both case, we may find a maximal $\Delta_j\in \mathscr{U}_j$, shown in Fig. 3, 4, 5, or 6,  that covers the attracting fixed point $A$ of $g$. 

\begin{lem}\label{L1}
Suppose that $\Omega_j$ is located above level $k$ with $k\leq m-1$ (Fig. $3, 4, 5, 6$). Let $\Delta_j\in \mathscr{U}_j$ be the maximal element that covers the attracting fixed point of $g$. Then at least one point of $\{X_j,Y_j\}:=\partial \Delta_j\cap \mathbf{S}^1$, $X_j$ say, lies above $\overline{P_{k+1}Q_{k+1}}$.
\end{lem}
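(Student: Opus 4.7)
The plan is to argue by contradiction. Suppose both $X_j$ and $Y_j$ lie at or below $\overline{P_{k+1}Q_{k+1}}$, i.e.\ both lie in the closed arc $(P_{k+1}AQ_{k+1})$ containing the attracting fixed point $A$; the goal is to contradict $\Omega_j\cap \Delta_k'\neq \emptyset$. Two facts will be exploited throughout: first, $\Omega_j$, being the intersection of the complements of all maximal elements of $\mathscr{U}_j$, is hyperbolically convex (an intersection of half-planes); second, $X_j,Y_j$ are limit points of $\bar{\Omega}_j$ as well, since $\partial \Delta_j=\overline{X_jY_j}$ is the common boundary geodesic of $\Omega_j$ and $\Delta_j$. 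The monotone nesting $\Delta_k'\subset \Delta_{k+1}'$, which follows from the fact that $\overline{P_kQ_k}$ lies above $\overline{P_{k+1}Q_{k+1}}$, will be crucial.

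I would split according to the cyclic placement of $X_j,Y_j$ relative to $A$. In the easy subcase $X_j$ and $Y_j$ lie on the same side of $A$ in the arc $(P_{k+1}AQ_{k+1})$. Since $\Delta_j$ must cover $A$, its boundary arc $\partial_{\infty}\Delta_j\cap \mathbf{S}^1$ is forced to be the long arc from $X_j$ to $Y_j$ through $A$, which under this subcase necessarily also contains $B$, $P_{k+1}$, and $Q_{k+1}$. The complementary half-plane $\Delta_j^c\supset \Omega_j$ is then the small half-plane with boundary arc at infinity lying entirely inside $(P_{k+1}AQ_{k+1})$, so $\Delta_j^c\subset \mathbf{H}\setminus \overline{\Delta_{k+1}'}$. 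Combined with $\Delta_k'\subset \Delta_{k+1}'$, this gives $\Omega_j\cap \Delta_k'\subset \Delta_j^c\cap \Delta_{k+1}'=\emptyset$, a contradiction.

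The harder subcase is when $X_j$ and $Y_j$ straddle $A$; then $\overline{X_jY_j}$ still lies strictly in $\mathbf{H}\setminus \overline{\Delta_{k+1}'}$, but $\Delta_j$ no longer covers $B$, so no direct disjointness argument works. Here I would pick $p\in \Omega_j\cap \Delta_k'$ and use convexity: the closed geodesic triangle with vertices $p,X_j,Y_j$ sits in $\bar{\Omega}_j$, and since $p$ lies above while $X_j,Y_j$ lie below $\overline{P_{k+1}Q_{k+1}}$, a nondegenerate subsegment of $\overline{P_{k+1}Q_{k+1}}$ must lie in $\bar{\Omega}_j$. To promote this to a contradiction, I would invoke Lemma 3.1 of \cite{CZ5}: $\mbox{axis}(g)$ cannot lie entirely in $\Omega_j$, and since $\Delta_j$ does not cover $B$ here, $\mbox{axis}(g)$ must enter some second maximal element $\Delta_j^*\in \mathscr{U}_j$ covering $B$. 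The main obstacle will then be to argue geometrically, matching the case analysis of Figures 3--6 and using $k\leq m-1$ together with the monotone nesting $\Delta_k'\subset \Delta_{k+1}'\subset \cdots \subset \Delta_m'$, that $\Delta_j^*$ (or another maximal element of $\mathscr{U}_j$ forced by the boundary structure of $\Omega_j$ and the segment of $\overline{P_{k+1}Q_{k+1}}$ trapped in $\bar{\Omega}_j$) must block $\Omega_j$ from reaching $\Delta_k'$, yielding the desired contradiction.
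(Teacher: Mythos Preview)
Your easy subcase is fine and dispatches the situation where $\Delta_j$ swallows both fixed points of $g$. But the harder subcase---where $X_j,Y_j$ straddle $A$---is left genuinely incomplete, and you say so yourself (``The main obstacle will then be to argue geometrically\ldots''). The convexity detour, showing that a subsegment of $\overline{P_{k+1}Q_{k+1}}$ lies in $\bar{\Omega}_j$, is correct but does not feed into a contradiction: an arbitrary geodesic may well pass through $\Omega_j$ without violating anything. What you actually need is not merely a maximal $\Delta_j^*\in\mathscr{U}_j$ covering $B$ (which Lemma~3.1 of \cite{CZ5} gives), but one that \emph{contains $\Delta_k'$}. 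That inclusion is the whole point, since then $\Omega_j\subset\mathbf{H}\setminus(\Delta_j\cup\Delta_j^*)$ forces $\Omega_j\cap\Delta_k'=\emptyset$.

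The paper closes this gap differently. It first observes that your harder subcase is exactly the Figure~5 situation: $\overline{P_kQ_k}$ is disjoint from $\Delta_j$ and $\Delta_j$ sits below it while meeting $\mbox{axis}(g)$. Then, rather than convexity, it invokes Lemma~2.1 of \cite{CZ3} together with the $g$-equivariance relations $g(\overline{P_jQ_j})=\overline{P_{j+1}Q_{j+1}}$ and $g(P_iP_{i+1})=(P_{i+1}P_{i+2})$ (equations~(\ref{JHG}) and~(\ref{JHG1})). These allow one to produce a maximal $\Delta_j^*\in\mathscr{U}_j$ that actually covers $\Delta_k'$, not just $B$. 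The $g$-equivariance is essential here: knowing that $\partial\Delta_j$ lies below $\overline{P_{k+1}Q_{k+1}}=g(\overline{P_kQ_k})$ lets one transport information back across one $g$-period to locate $\partial\Delta_j^*$ above $\overline{P_kQ_k}$. Your sketch never uses this equivariance, and without it there is no mechanism forcing $\Delta_j^*\supset\Delta_k'$.
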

\begin{proof}
By assumption, $\Omega_j\cap \Delta_k'\neq \emptyset$. If $\Omega_j\subset \Delta_k'$ (Fig. 6), then for the $\Delta_j$ shown in Fig. $6$, $\{X_j,Y_j\}$ both lie above $\overline{P_kQ_k}$. So both $\{X_j,Y_j\}$ lie above $\overline{P_{k+1}Q_{k+1}}$.  Suppose now that $\Omega_j$ is not a subset of $\Delta_k'$ and $\overline{P_kQ_k}$ crosses $\Delta_j$ (Fig. 3, 4), then we see that $X_j$ lies above $\overline{P_{k}Q_{k}}$. In particular, $X_j$ lies above $\overline{P_{k+1}Q_{k+1}}$. 

It remains to consider the case where $\overline{P_{k}Q_{k}}$ is disjoint from $\Delta_j$ (Fig. 5). Then $\Delta_j$ lies below $\overline{P_{k}Q_{k}}$ and intersects $\mbox{axis}(g)$. If both $\{X_j,Y_j\}$ lie below $\overline{P_{k+1}Q_{k+1}}$, then by Lemma 2.1 of \cite{CZ3}, (\ref{JHG}) and (\ref{JHG1}) we can find a maximal element $\Delta_j^*\in \mathscr{U}_j$ that covers $\Delta_k'$. Note that $\Omega_j\subset \mathbf{H}\backslash (\Delta_j\cup \Delta_j^*)$. We conclude that $\Omega_j$ is disjoint from  $\Delta_k'$. This is a contradiction. 
\end{proof}


\unitlength 1mm 
\linethickness{0.4pt}
\ifx\plotpoint\undefined\newsavebox{\plotpoint}\fi 


\begin{lem}\label{L2}
Suppose that $\Omega_j$ is located above level $k$ for $1\leq k\leq m-1$. Let $\Delta_j\in \mathscr{U}_j$ be the maximal element obtained from Lemma $\ref{L1}$. Then we have {\rm (i)} $\Delta_j$ is not contained in $\Delta_m'$,  {\rm (ii)} $\Delta_j$ is not disjoint from $\mbox{axis}(g)$, and  {\rm (iii)} $\Delta_j\cap \Delta_m'\neq \emptyset$.   
\end{lem}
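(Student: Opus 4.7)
The plan is to analyze the positions on $\mathbf{S}^1$ of the endpoints $X_j,Y_j$ of $\partial \Delta_j$ together with $P_m,Q_m$ and $A,B$, leaning on the single geometric fact that $\mbox{axis}(g)$ crosses $\partial \Delta_m'=\overline{P_mQ_m}$. This last follows from $\Delta_0^*$ covering $B$ but not $A$ (otherwise $\mbox{axis}(g)\subset \Delta_0^*$ would be disjoint from $\Omega_0$, contradicting $\Omega_0\cap \mbox{axis}(g)\neq \emptyset$) together with the $g^m$-invariance of $\mbox{axis}(g)$. In particular, the boundary arc of $\Delta_m'$ at infinity is $(P_mBQ_m)$, and $A\notin (P_mBQ_m)$.

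For (i), I would argue by contradiction: if $\Delta_j\subset \Delta_m'$, then passing to closures in $\overline{\mathbf{H}}$ forces the arc of $\Delta_j$ at infinity into $(P_mBQ_m)$; since $\Delta_j$ covers $A$, this puts $A\in (P_mBQ_m)$, contradicting the paragraph above. For (ii), I would read the conclusion off the construction of $\Delta_j$ preceding Lemma~\ref{L1}: if $\mbox{axis}(g)$ lies entirely in a single maximal element $\widetilde{\Delta}\in \mathscr{U}_j$, then $\widetilde{\Delta}$ covers both $A$ and $B$; since distinct maximal elements of $\mathscr{U}_j$ are disjoint half-planes with disjoint open arcs at infinity, at most one of them can cover $A$, so $\widetilde{\Delta}=\Delta_j$ and $\mbox{axis}(g)\subset \Delta_j$. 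Otherwise $\mbox{axis}(g)$ crosses $\partial \Delta_j$ directly. Either way $\Delta_j\cap \mbox{axis}(g)\neq \emptyset$.

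For (iii), which is the substantive part, Lemma~\ref{L1} supplies an endpoint $X_j$ of $\partial \Delta_j$ lying strictly inside $(P_{k+1}BQ_{k+1})$. I would next establish the monotone nesting $(P_jBQ_j)\subseteq (P_{j+1}BQ_{j+1})$ for $1\leq j\leq m-1$: since $g$ preserves orientation on $\mathbf{S}^1$ and has attracting fixed point $A$, it pushes the two endpoints of $\partial \Delta_0^*$ along the two $B$-to-$A$ sides of $\mathbf{S}^1$ toward $A$, so (\ref{JHG}) and (\ref{JHG1}) yield the nesting. Combined with $k+1\leq m$, this places $X_j$ strictly interior to the boundary arc $(P_mBQ_m)$ of $\Delta_m'$. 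Because the boundary arc $(X_jAY_j)$ of $\Delta_j$ heads from $X_j$ toward $A\notin (P_mBQ_m)$, a small open sub-arc of $\mathbf{S}^1$ on the $A$-side of $X_j$ sits simultaneously in the interiors of $(X_jAY_j)$ and $(P_mBQ_m)$. Two half-planes whose boundary arcs at infinity overlap in an open sub-arc necessarily intersect in $\mathbf{H}$ (any point $p$ in the common open sub-arc has an $\overline{\mathbf{H}}$-neighborhood whose $\mathbf{H}$-intersection lies in both half-planes), so $\Delta_j\cap \Delta_m'\neq \emptyset$.

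The step requiring the most care is the monotone nesting of the arcs $(P_jBQ_j)$; once that is established from the dynamics of $g$ and the identities of Section~3, the remainder of (iii) reduces to a short boundary-arc comparison near $X_j$.
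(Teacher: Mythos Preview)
Your proof is correct and follows essentially the same route as the paper: parts (i) and (ii) are read off from the fact that $\Delta_j$ covers $A$ while $\Delta_m'$ does not, and part (iii) hinges on Lemma~\ref{L1} together with the nesting $(P_{k+1}BQ_{k+1})\subseteq (P_mBQ_m)$ to place $X_j$ above $\overline{P_mQ_m}$. The only difference is cosmetic: the paper finishes (iii) by a short case analysis on the position of $Y_j$ (Figures~3--6 with $k$ replaced by $m$), whereas you give a single boundary-arc overlap argument near $X_j$, which is a cleaner way to reach the same conclusion.
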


\begin{proof}
Properties (i) and (ii) follow directly from the construction of $\Delta_j$ (by noting that $\Delta_j$ covers the attracting fixed point of $g$ while $\Delta_m'$ does not). For (iii),
we write $\{X_j,Y_j\}=\partial \Delta_j$.  By Lemma \ref{L1}, at least one point of $\{X_j,Y_j\}$, $X_j$ say, lies above $\overline{P_mQ_m}$. If both $X_j$ and $Y_j$ lie above  $\overline{P_{m}Q_m}$ (Fig. 6 with $k=m$), then $\Delta_j$ satisfies the conditions (i)-(iii) of the lemma. We are done. If only $X_j$ lies above  $\overline{P_mQ_m}$, then either $\Delta_j\supset \mbox{axis}(g)$ (Fig. 4 with $k=m$), in which case, $\Delta_j$ satisfies the conditions (i)-(iii) of the lemma), or $X_j$ and $Y_j$ are separated by $\mbox{axis}(g)$ (Fig. 3 with $k=m$), in which case,  $\partial \Delta_j\cap \mbox{axis}(g)\neq \emptyset$. It is easy to see that $\Delta_j$ is not contained in $\Delta_m'$ and $\Delta_j\cap \Delta_m'\neq \emptyset$. 

If both $X_j, Y_j$ lie below $\overline{P_kQ_k}$ (Fig. 5), by Lemma 3.1, at least one point of $\{X_j, Y_j\}$ lies above $\overline{P_{k+1}Q_{k+1}}$. Since $k+1\leq m$, we conclude that $\Delta_j\cap \Delta_m'\neq \emptyset$ and thus conditions (i)-(iii) remains valid.  
\end{proof}

\begin{lem}\label{L3}
If $\Omega_j$ is located above level $m-1$, or at level $m-2$, then $d_{\mathcal{C}}(u_j, u_m)\geq 2$. 
\end{lem}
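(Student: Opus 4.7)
The plan is to argue by contradiction: suppose $d_{\mathcal{C}}(u_j, u_m) \leq 1$, so either $u_j = u_m$ or $u_j$ and $u_m$ are disjoint distinct vertices.

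First I would dispatch $u_j = u_m$. Since $\chi$ is a bijection and $\tilde{u}_j = \tilde{u}_m$, this forces $\Omega_j = \Omega_m$ and $\mathscr{U}_j = \mathscr{U}_m$. If $\Omega_j$ is above level $m-1$, then $\Omega_j \cap \Delta_{m-1}' \neq \emptyset$; but $\Omega_m$ is disjoint from the maximal element $\Delta_m' \in \mathscr{U}_m$, and $\Delta_{m-1}' \subsetneq \Delta_m'$ gives $\Omega_m \cap \Delta_{m-1}' = \emptyset$, a contradiction. If $\Omega_j$ is at level $m-2$, then both $\Delta_{m-2}'$ and $\Delta_m'$ would be maximal in $\mathscr{U}_m$, but $\Delta_{m-2}' \subsetneq \Delta_m'$ contradicts maximality.

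Next I would handle the disjoint case. In the ``at level $m-2$'' regime, the hypothesis $\Delta_{m-2}' \in \mathscr{U}_j$ requires $\overline{P_{m-2} Q_{m-2}} \in \varrho^{-1}(\tilde{u}_j)$; since this geodesic projects to $\tilde{u}_0 = \tilde{u}_m$, we obtain $\tilde{u}_j = \tilde{u}_m$ automatically. When $\tilde{u}_j = \tilde{u}_m$, the adjacency discussion in Section 2 tells us $\Omega_j$ and $\Omega_m$ share a boundary geodesic $\gamma \in \varrho^{-1}(\tilde{u}_m)$ bounding a maximal $\tilde{\Delta} \in \mathscr{U}_m$ with $\Omega_j \subset \tilde{\Delta}$. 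If $\tilde{\Delta} \neq \Delta_m'$, pairwise disjointness of distinct maximal elements of $\mathscr{U}_m$ forces $\bar{\Omega}_j$ to be disjoint from the interior of $\Delta_m'$, which contradicts either $\Omega_j \cap \Delta_{m-1}' \neq \emptyset$ (above level $m-1$, since $\Delta_{m-1}' \subset \Delta_m'$) or $\overline{P_{m-2} Q_{m-2}} \subset \bar{\Omega}_j \cap \Delta_m'$ (at level $m-2$). If $\tilde{\Delta} = \Delta_m'$, then $\Omega_j \subset \Delta_m'$ with $\bar{\Omega}_j$ touching $\overline{P_m Q_m}$; inserting the chord $\overline{P_{m-1} Q_{m-1}} \in \varrho^{-1}(\tilde{u}_j)$ (strictly between $\overline{P_{m-2} Q_{m-2}}$ and $\overline{P_m Q_m}$) forces $\bar{\Omega}_j$ to reach geodesics on both sides of it, impossible for a single component of $\mathbf{H} \setminus \varrho^{-1}(\tilde{u}_j)$.

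For $\tilde{u}_j \neq \tilde{u}_m$ (only possible in the ``above level $m-1$'' case), I would invoke Lemma~\ref{L2} to produce a maximal $\Delta_j \in \mathscr{U}_j$ covering $A$ with $\Delta_j \not\subset \Delta_m'$ and $\Delta_j \cap \Delta_m' \neq \emptyset$. A quick argument shows $\Delta_m' \not\subset \Delta_j$: otherwise $\Omega_j \cap \Delta_{m-1}' \subseteq \Omega_j \cap \Delta_j = \emptyset$, contradicting $\Omega_j$ being above level $m-1$. So $\partial \Delta_j$ and $\partial \Delta_m'$ properly cross, and Lemma~\ref{L1} places the endpoints $X_j, Y_j$ of $\partial \Delta_j$ on opposite sides of $\overline{P_m Q_m}$. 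Applying Lemma~\ref{L2.1} to $\Delta_j$ and to $\Delta_m'$ in turn, each must strictly contain (rather than be contained in) some maximal element of the other configuration, since pairwise disjointness combined with the positions of $A$ and $B$ rules out the reverse inclusions via Lemma~\ref{L2}(iii). The remaining configuration, in which $\Delta_j$ properly contains some maximal $\Delta^* \in \mathscr{U}_m$ while $\Delta_m'$ properly contains some maximal $\Delta^{**} \in \mathscr{U}_j$, is then incompatible with the crossing geometry of $\partial \Delta_j$ and $\partial \Delta_m'$ dictated by Lemma~\ref{L1}.

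The main obstacle is closing out this last configuration in the $\tilde{u}_j \neq \tilde{u}_m$ regime: one must weigh the pairings prescribed by Lemma~\ref{L2.1} against the pairwise disjointness within each $\mathscr{U}$ and the precise arc positions of $X_j, Y_j$ provided by Lemma~\ref{L1}. By contrast, the same-fiber sub-cases follow cleanly from the explicit nested geometry of the chords $\overline{P_j Q_j}$.
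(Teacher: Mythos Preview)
Your argument has a genuine gap in the $\tilde{u}_j \neq \tilde{u}_m$ sub-case. From $\Delta_j \not\subset \Delta_m'$, $\Delta_m' \not\subset \Delta_j$, and $\Delta_j \cap \Delta_m' \neq \emptyset$ you conclude that $\partial\Delta_j$ and $\partial\Delta_m'$ ``properly cross''. That inference is false for half-planes: two half-planes with \emph{disjoint} boundary geodesics can satisfy all three conditions, namely when $\Delta_j \cup \Delta_m' = \mathbf{H}$. This configuration is not only possible here, it is precisely the one the paper treats. When $\partial\Delta_j \cap \partial\Delta_m' = \emptyset$ one has $\Delta_j \cap \mathbf{S}^1 \supset (P_mAQ_m)$, so $\mathbf{H}\setminus\Delta_m' \subset \Delta_j$; hence $\Omega_m \subset \mathbf{H}\setminus\Delta_m'$ is contained in $\Delta_j$, which is disjoint from $\Omega_j$. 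Now the contrapositive of Lemma~\ref{L2.1} (for $\tilde{u}_j \neq \tilde{u}_m$) gives $d_{\mathcal{C}}(u_j,u_m)\geq 2$ in one stroke. Conversely, when $\partial\Delta_j \cap \partial\Delta_m' \neq \emptyset$, two lifts of $\tilde{u}_j$ and $\tilde{u}_m$ cross, so $\tilde{u}_j$ meets $\tilde{u}_m$ and therefore $u_j$ meets $u_m$ --- the conclusion is immediate and your subsequent Lemma~\ref{L2.1} machinery (which you yourself flag as unfinished) is unnecessary.

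For the ``at level $m-2$'' case you take a different route from the paper, arguing via adjacency in the fiber $F_{\tilde{u}_m}$. This can be made to work, but note that Section~2 only states that adjacency implies $d_{\mathcal{C}}=1$, not the converse you invoke; you would need to justify that two disjoint curves in the same fiber must come from adjacent regions. The paper instead stays with the half-plane picture: by Lemma~2.1 of \cite{CZ3} there is a second maximal $\Delta_j^*\in\mathscr{U}_j$ with $\Delta_j^* \supset \mathbf{H}\setminus\Delta_{m-1}'$, so again $\Omega_m \subset \Delta_j^*$ forces $\Omega_j \cap \Omega_m = \emptyset$ and Lemma~\ref{L2.1} finishes.
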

\begin{proof}
First assume that $\Omega_j$ is located above level $m-1$. By Lemma \ref{L2}, there is a maximal $\Delta_j\in \mathscr{U}_j$ such that $\Delta_j$ is not contained in $\Delta_m'$ and $\Delta_j\cap \Delta_m'\neq \emptyset$. If $\partial \Delta_j\cap \partial \Delta_m'\neq \emptyset$, then $\tilde{u}_j$ intersects $\tilde{u}_m$, where $\tilde{u}_j$ is the geodesic on $\tilde{S}$ homotopic to $u_j$ if $u_j$ is viewed as a curve on $\tilde{S}$. Hence $u_j$ intersects $u_m$ and the assertion follows. 

Assume now that  $\partial \Delta_j\cap \partial \Delta_m'= \emptyset$. Then $\Delta_j\cap \mathbf{S}^1\supset (P_mAQ_m)$. In this case, $\Omega_j\subset \mathbf{H}\backslash (\Delta_j\cup \Delta_j^*)$ is disjoint from $\mathbf{H}\backslash \Delta_m'$. But $\Omega_m\subset \mathbf{H}\backslash \Delta_m'$. Hence $\Omega_j$ is disjoint from $\Omega_m$. It follows from Lemma \ref{L2.1} that $d_{\mathcal{C}}(u_j, u_m)\geq 2$.

Now suppose that $\Omega_j$ is located at level $m-2$ (Fig. 2 with $k=m-2$). Then there is maximal $\Delta_j\in \mathscr{U}_j$ such that $\Delta_j=\Delta_{m-2}'$. Again, by Lemma 2.1 of \cite{CZ3}, there is a maximal $\Delta_j^*\in \mathscr{U}_j$, shown in Fig. 2, so that $\Delta_j^*$ is  disjoint from $\Delta_j$, such that $\partial \Delta_j^*$ intersects $\mbox{axis}(g)$ and  $\Delta_j^*$ contains $\mathbf{H}\backslash \Delta_{m-1}'$. In particular, we see that $\Delta_j^*\cap \Delta_m'\neq \emptyset$. The assertion follows from Lemma \ref{L2.1}.
\end{proof}
\noindent {\em Remark. } The bound $m-2$ is optimal. In fact, if $\Omega_j$ is located at level $m-1$, then $\Omega_j\subset \Delta_m'\backslash \Delta_{m-1}'$ and it could be the case that $d_{\mathcal{C}}(\chi(\Omega_j),u_m)=d_{\mathcal{C}}(u_j,u_m)=1$. See Lemma 2.3 of \cite{CZ6}. 

\section{Proof of Theorem 1.2}
\setcounter{equation}{0}

We only treat the case where $m>0$. 
Theorem 1.2 was proved when $m=3, 4$ (by Theorem 1.1 of \cite{CZ3} and Theorem 1.1 of \cite{CZ5}). So we assume that $m\geq 5$. Note that all $u_j$, $j=1,2,\cdots,s$, are non-preperipheral geodesics, which allow us to acquire the configurations $(\tau_j, \Omega_j, \mathscr{U}_j)$ for $j=1,2,\cdots,s$. 

We first verify that $\Omega_1$ is located above or at level 1. Suppose not. Then $\Omega_1\cap \Delta_1'=\emptyset$ and there is no maximal element of $\mathscr{U}_1$ that equals $\Delta_1'$. There is a maximal element $\Delta_1\in \mathscr{U}_1$ such that $\Delta_1'\subset \Delta_1$. In particular, $\Delta_1\cap \Delta_0\neq \emptyset$, $\partial \Delta_1 \cap \partial \Delta_0=\emptyset$ and $\Delta_1\cup \Delta_0=\mathbf{H}$. This implies that $\mathbf{H}\backslash \Delta_1$ is disjoint from $\mathbf{H}\backslash \Delta_0$. So $\Omega_1$ is disjoint from $\Omega_0$. Hence by Lemma \ref{L2.1}, $d_{\mathcal{C}}(u_0,u_1)\geq 2$. This is a contradiction. 

By induction hypothesis, suppose that $\Omega_j$, $j\leq m-3$, is located above or at level $j$. We need to show that $\Omega_{j+1}$ is located above or at level $j+1$. Otherwise, suppose that $\Omega_{j+1}$ is located neither above nor at level $j+1$. There is a maximal element $\Delta_{j+1}''\in \mathscr{U}_{j+1}$ that contains $\Delta_{j+1}'(=g^{j+1}(\Delta_0^*)$), which says that $\partial \Delta_{j+1}''$ lies below $\overline{P_{j+1}Q_{j+1}}$. By assumption, $\Omega_j$ is located above or at level $j$. 

Case 1. $\Omega_j$ is located above level $j$ (Fig. 3, 4, 5, 6). By Lemma \ref{L2}, there is a maximal element $\Delta_j\in \mathscr{U}_j$, which covers the attracting fixed point $A$ of $g$, such that either $\partial \Delta_j$ lies above $\overline{P_{j+1}Q_{j+1}}$ or  $\partial \Delta_j$ intersects  $\overline{P_{j+1}Q_{j+1}}$. Both cases would imply that $\Delta_j\cap \Delta_{j+1}''\neq \emptyset$ and thus that $u_j$ and $u_{j+1}$ intersect. This contradicts that $d_{\mathcal{C}}(u_j,u_{j+1})=1$. 

Case 2. $\Omega_j$ is located at level $j$ (Fig. 2), then there is a maximal $\Delta_j\in \mathscr{U}_j$ such that $\Delta_j=\Delta_j'(=g^j(\Delta_0^*)$). Let $\Delta_j^*\in \mathscr{U}_j$ be the maximal element that contains $g(\mathbf{H}\backslash \Delta_j)$. Then either $\partial \Delta_j^*$ lies above $\overline{P_{j+1}Q_{j+1}}$, or $\partial \Delta_j^*=\overline{P_{j+1}Q_{j+1}}$. Note that $\partial \Delta_{j+1}''$ lies below $\overline{P_{j+1}Q_{j+1}}$. We conclude that in both cases $\Delta_j^*\cap \Delta_{j+1}''\neq \emptyset$. This again implies that $u_j$ and $u_{j+1}$ intersect, contradicting that $d_{\mathcal{C}}(u_j,u_ {j+1})=1$.

We conclude that  for all $j$ with $j\leq m-2$, $\Omega_j$ is located above or at level $j$. In particular, $\Omega_{m-2}$ is located above or at level $m-2$. If $\Omega_{m-2}$ is located above level $m-2$, then it lies above level $m-1$. By Lemma \ref{L3}, $d_{\mathcal{C}}(u_{m-2}, u_m)\geq 2$. If  $\Omega_{m-2}$ is located at level $m-2$, then again Lemma \ref{L3} says that $d_{\mathcal{C}}(u_{m-2}, u_m)\geq 2$. This proves that $s\geq m-1$ and thus that $d_{\mathcal{C}}(u_{0}, u_m)\geq m$.


\noindent {\em Remark. } From the proof we also deduce that $d_{\mathcal{C}}(u_{0}, u_m)= m$ if and only if $\Omega_0\cap \mbox{axis}(g)\neq \emptyset$ and $i(\tilde{c}, \tilde{u}_0)=1$. In this case, all $u_j$ are non preperipheral geodesic and for every $j=1,\cdots, m-1$, $\Omega_j$ is located at level $j$. Since $i(\tilde{c}, \tilde{u}_0)=1$, we see that $P_0=P_1$ and $Q_0=Q_1$. Also in the terminology of \cite{CZ6}, for $j=0,\cdots, m-1$, $\Omega_j$ is adjacent to $\Omega_{j+1}$, and thus $D(\Omega_j,\Omega_{j+1})=1$. It follows that  $d_{\mathcal{C}}(u_{0}, u_m)= \sum_{j=0}^{m-1}D(\Omega_j,\Omega_{j+1})=m$.

\end{document}